\documentclass{amsart}
\usepackage{appendix}
\usepackage{amssymb,amsfonts,amsmath,amsthm}
\usepackage{hyperref}
\usepackage{url}
\usepackage{tabularx}
\usepackage{mathtools}
\usepackage[
backend=biber,
style=alphabetic,
sorting=nyt
]{biblatex}
\newcommand{\omminom}{\omega^{<\omega}}
\newcommand{\baire}{\omega^{\omega}}

\makeatletter
\def\subsection{\@startsection{subsection}{3}%
  \z@{.5\linespacing\@plus.7\linespacing}{.3\linespacing}%
  {\centering}}
\makeatother

\makeatletter
\def\subsubsection{\@startsection{subsubsection}{3}%
  \z@{.5\linespacing\@plus.7\linespacing}{.3\linespacing}%
  {\centering}}
\makeatother

\makeatletter
\def\myfnt{\ifx\protect\@typeset@protect\expandafter\footnote\else\expandafter\@gobble\fi}
\makeatother

\newtheorem{theorem}{Theorem}[section]

\newtheorem{corollary}[theorem]{Corollary}
\newtheorem{definition}[theorem]{Definition}

\newtheorem{proposition}[theorem]{Proposition}

\newtheorem{problem}{Problem}

\newtheorem{question}{Question}
\newtheorem{fact}[theorem]{Fact}
\newtheorem{remark}[theorem]{Remark}
\newtheorem{notation}[theorem]{Notation}

\newtheorem{conjecture}{Conjecture}

\newtheorem*{theorem1.1}{Theorem 1.1}
\newtheorem*{theorem1.3}{Theorem 1.3}
\newtheorem*{theorem1.4}{Theorem 1.4}
\newtheorem*{proposition5.1}{Proposition 5.1}

\begin{document}

\begin{abstract} 
Motivated by the results for \emph{Magic: The Gathering} presented in \cite{mtgTuringComp} and \cite{HardArithmetic}, we study a (different) computability problem about winning strategies in \emph{Yu-Gi-Oh! Trading Card Game}, a popular card game developed and published by \emph{Konami}. We show that the problem of establishing whether, from a given game state, a given computable strategy is winning is undecidable. In particular, not only do we prove that the Halting Problem can be reduced to this problem, but also that this problem is actually $\Pi^1_1$-complete. We extend this last result to all strategies with a reduction on the set of countable well orders, a classic $\boldsymbol{\Pi}^1_1$-complete set. 
For these reductions we present two legal decks (according to the current \emph{Forbidden \& Limited List} of \emph{Yu-Gi-Oh! Trading Card Game}) that can be used by the player who goes first to perform them. 
\end{abstract}

\title[]{Deciding winning strategies in Yu-Gi-Oh! TCG is hard}

\subjclass[2020]{Primary 03D35 ; Secondary 91A44, 03E15}


\author{Orazio Nicolosi}
\address{Department of Mathematics ``Giuseppe Peano'', University of Torino, Via Carlo Alberto 10, 10123, Torino Italy.}
\email{orazio.nicolosi@unito.it}

\author{Federico Pisciotta}
\address{Department of Mathematics, Computer Science and Physics, University of Udine, Via delle Scienze 206, 33100, Udine Italy.}
\email{pisciotta.federico@spes.uniud.it}

\author{Lorenzo Bresolin}
\address{Department of Mathematics, University of Pisa, Largo Bruno Pontecorvo 5, 56127, Pisa Italy.}
\email{lorenzo.bresolin@phd.unipi.it}

\maketitle


\section{Introduction}

\noindent \emph{Yu-Gi-Oh! Trading Card Game (TCG)} is a popular trading card game owned by Konami and based on the manga series \emph{Yu-Gi-Oh!}. It is a turn-based, competitive strategy game in which participants utilize customizable decks of cards to reduce their opponent’s Life Points to zero or to force them to ``deck out", while maintaining strategic control of the game state. The game is a two-player, non-zero-sum, stochastic game with imperfect information, characterized by a complex interaction of card types, summoning methods, timing windows, and chained effect resolutions.

\noindent Despite its popularity, the authors are not aware of any formal academic literature concerning the computational or logical properties of \emph{Yu-Gi-Oh! TCG}. In contrast, the game \emph{Magic: The Gathering}, another popular card game owned by Wizards of the Coast, has received significant attention. The problem of optimal play in \emph{Magic: The Gathering} has been studied from a probabilistic perspective (see, for example, \cite{Ward2009MonteCS} and \cite{Ward2012MonteCS}) and from the viewpoint of computability theory. In particular, the work of Churchill, Biderman, and Herrick \cite{mtgTuringComp} proves that determining optimal play in \emph{Magic: The Gathering} is an undecidable problem. Moreover, Biderman \cite{HardArithmetic} shows that the problem is at least as hard as arithmetic.

\noindent Motivated by these results, we ask whether an analogous phenomenon occurs in the \emph{Yu-Gi-Oh! TCG}. However, unlike \emph{Magic: The Gathering}, \emph{Yu-Gi-Oh! TCG} does not appear to have the same degree of automatism built into its card interactions. For this reason, we slightly reformulate the problem: instead of considering whether a given position is winning for one of the players, we ask whether a strategy starting from a given configuration guarantees victory.

\begin{question}\label{mainproblem}
Can we determine whether a strategy in the \emph{Yu-Gi-Oh! TCG} is winning?
\end{question}

\noindent Our paper aims to fill this gap in the literature by adapting, in part, the ideas presented in \cite{mtgTuringComp}. In that paper, the authors embed a universal Turing machine into the game in such a way that Player~1 may initiate a computation. In \emph{Yu-Gi-Oh! TCG}, while we do not seem to have this same degree of automatism, we can make the game endless and store arbitrarily large amounts of information. To do so, we introduce two additional assumptions: first, on each turn, a player may perform only finitely many operations; second, there is no time constraint on the duration of the match. We show that, with appropriate adjustments, Player~1 can follow a strategy that simulates the computation of any Turing machine. This is the key observation allowing us to give a negative answer to Question~\ref{mainproblem}.

\noindent From a technical point of view, following the discussion in \cite{mtgTuringComp}, we believe that the \emph{Yu-Gi-Oh! TCG} is \emph{transition-computable}, in the sense that the function mapping a game configuration together with a legal move to the resulting configuration is computable. We point out that this claim is more reasonable for \emph{Yu-Gi-Oh! TCG} than for \emph{Magic: The Gathering}, since card effects in \emph{Yu-Gi-Oh! TCG} are typically simpler and offer fewer choices. Nevertheless, establishing this rigorously appears difficult without an exhaustive analysis of the more than 13,000 cards currently in the game. We therefore state the following as a conjecture:

\begin{conjecture}
The function that maps a game configuration and a legal move to the consequent configuration in the \emph{Yu-Gi-Oh! TCG} is computable.
\end{conjecture}

\noindent In the first instance, we restrict our attention to \emph{computable strategies}. Our initial idea is to reduce to this problem the classical Halting Problem. Moreover, since we wish to work within the space of natural numbers~$\omega$ (rather than uncountable spaces, which require different techniques), we restrict ourselves to computable strategies (i.e., the ones that can be performed by computer programs), of which there are only countably many.

\noindent After a precise formalization of the problem, we obtain the following:

\begin{theorem}\label{maintheorem}
Determining whether a computable strategy is winning in \emph{Yu-Gi-Oh! TCG} is an undecidable problem.
\end{theorem}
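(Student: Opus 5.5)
We prove the theorem by a many-one reduction from the Halting Problem. To a Turing machine $M$ (with empty input) we will computably assign a pair $(G_M,\sigma_M)$: $G_M$ is a fixed legal starting configuration, built from the two legal decks mentioned in the abstract, and $\sigma_M$ is a computable strategy for Player~1. We will arrange that $\sigma_M$ is winning from $G_M$ exactly when $M$ halts. Any decision procedure for ``the computable strategy $\sigma$ (given by an index) is winning from $G$'' would then decide the Halting Problem, which is impossible. The formal setup is the one announced in the introduction: each turn consists of finitely many operations, the match has no time limit, and a strategy is winning if every play consistent with it ends in a victory for Player~1. Plays that never end do not count as wins.

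The construction has three ingredients.

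\textbf{Loop.} First, we exhibit a combo that Player~1 can repeat every turn forever, so the game never ends unless Player~1 chooses to end it. The combo must keep Player~1 from decking out, for instance by recycling cards from the Graveyard back into the Deck. It must also keep Player~1 safe from any interference by the opponent. To guarantee safety we lock the opponent out on the first turn, using floodgates and negation, or by putting the opponent in a position where no legal action can affect Player~1.

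\textbf{Unbounded counter.} Second, we build a way to store an arbitrarily large natural number in the game state. Natural candidates are Life Points, which Player~1 can raise and lower in fixed increments through repeatable effects, or counters placed on a card. We need at least two such registers, or a single one with a pairing scheme, so that a two-counter Minsky machine can be simulated. Since Minsky machines are Turing complete, it is enough to simulate the operations increment, decrement, and test for zero.

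\textbf{Finishing move.} Third, we identify an action that wins the game immediately: a burn effect or an attack that reduces the opponent's Life Points to $0$, available at any time during the loop.

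With these in place, $\sigma_M$ is defined as follows. On turn $n$, Player~1 performs the loop and simulates the $n$-th step of $M$ on the registers. Each step is a finite sequence of card activations chosen according to the current instruction of $M$. If $M$ has reached its halting state, Player~1 plays the finishing move instead. Because the opponent is locked, every play consistent with $\sigma_M$ coincides with this single simulated run. The strategy $\sigma_M$ is computable, and its index is obtained uniformly from $M$. If $M$ halts, Player~1 wins in finitely many turns. If $M$ never halts, the game is infinite, so $\sigma_M$ is not winning.

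The main obstacle is the card-level implementation of the second ingredient: a zero-test-capable counter that the opponent cannot disrupt. It has to use effects that are repeatable an unbounded number of times, comply with hard once-per-turn restrictions, and stay legal under the current Forbidden \& Limited List. I would first try Life Points as the register. The zero-test can then be read off the visible state by $\sigma_M$ itself, since the strategy (not the game) decides what to do next, which removes the need for an in-game conditional. The remaining difficulty is checking that the opponent's possible responses to the lock cannot change the outcome.
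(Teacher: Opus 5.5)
Your outer reduction is the one the paper uses: send the machine index to a legal run together with a computable strategy that, with the opponent locked out, simulates the machine turn by turn and attacks once it halts. Your remark that one register suffices, because the computable strategy rather than the game does the decoding, is also exactly the paper's encoding. The whole machine configuration is a single natural number, stored as the Spell Counters on \textsl{Magical Citadel of Endymion} (not as Life Points, where the value $0$ would end the game). So neither a Minsky machine nor an in-game zero test is needed, only the ability to move the counter to any positive value each turn.

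The gap is that the proposal stops exactly where the proof begins. The loop, the register and the lock are only postulated (``we exhibit a combo'', ``natural candidates are Life Points''), and you yourself list their realization as the main obstacle. Yet the theorem is precisely the claim that legal cards realize them. As written, you have only the routine implication ``if such a board exists, the problem is undecidable''.

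The paper supplies the missing construction with an explicit deck and line of play.
\begin{itemize}
\item \emph{Increase.} Counters go up by $3$ per cycle. \textsl{Book of Eclipse} turns the monsters face-down, and \textsl{Bait Doll} forces the just-set \textsl{Desert Sunlight}, which is legal thanks to \textsl{Temple of the Kings}. The resulting flips of two \textsl{Magician of Faith} and \textsl{Mask of Darkness} recycle \textsl{Book of Eclipse}, \textsl{Upstart Goblin} and \textsl{Desert Sunlight}, and \textsl{Upstart Goblin} redraws \textsl{Bait Doll}.
\item \emph{Decrease.} Counters go down by $5$ per cycle. Special summon \textsl{Endymion, the Master Magician} by removing $6$ counters, recycle a spell, and destroy it with \textsl{Offerings to the Doomed}. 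That card adds one counter and skips Player~1's next Draw Phase, so the empty Main Deck never causes a deck-out.
\item \emph{Lock.} The opponent's hand is emptied by \textsl{Desert Sunlight} and \textsl{Soul Drain} flipping \textsl{Morphing Jar} under \textsl{Protector of the Sanctuary}. \textsl{Vanity's Ruler} blocks special summons, and \textsl{Yata-Garasu} each turn makes the opponent skip their Draw Phase.
\end{itemize}

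Two further remarks bear on your plan. First, the legal run is part of the input, so the board only needs to be reachable by some legal run, possibly with a cooperative opponent; the paper assumes this freely. It need not be forced on turn one against every deck, which removes part of the difficulty you anticipate. Second, play after the run genuinely must be checked. Your step ``if $M$ never halts, the game is infinite'' requires that the lock never ends the game by itself. For instance, a \textsl{Yata-Garasu} lock works only by inflicting battle damage, i.e.\ $200$ Life Points per turn. So one must also make sure this cannot bring the opponent to $0$ on its own.
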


\noindent As a corollary, this yields an impossibility result for computer algorithms:

\begin{corollary}
No computer program can decide whether a strategy is winning in \emph{Yu-Gi-Oh! TCG}.
\end{corollary}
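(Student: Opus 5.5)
We plan to prove Theorem~\ref{maintheorem} by a many-one reduction from the Halting Problem $H=\{e : \varphi_e(e)\!\downarrow\}$, or equivalently from its complement. First we fix a formalization. A \emph{game state} is a finite description of all zones, life points, turn counter and pending chain. A \emph{strategy} for Player~1 is a function assigning to every finite history of legal moves a legal move. It is \emph{computable} if it is given by a Turing machine index, and it is \emph{winning} from a state $s$ if every play from $s$ consistent with it ends in a win for Player~1. The problem is the set $W=\{(s,e) : \varphi_e \text{ is a total strategy, winning from } s\}$. By the Conjecture (transition-computability), checking that a finite history is consistent with $\varphi_e$ is effective, so it makes sense to ask whether $W$ is decidable. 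We will build a computable map $e\mapsto(s_0,\sigma_e)$, with $s_0$ a fixed legal state and $\sigma_e$ a computable strategy, such that $\sigma_e$ is winning from $s_0$ if and only if $\varphi_e(e)\!\downarrow$. Since $H$ is undecidable, so is $W$.

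The construction has three parts. (1) \emph{An unbounded loop.} We choose a legal deck and a starting state $s_0$ in which Player~1 controls a locked board: the opponent is prevented from acting meaningfully, and Player~1 has a repeatable, finitely bounded sequence of actions each turn. This sequence must avoid decking out and avoid any forced game end, so the game can last forever. (2) \emph{Unbounded memory.} We identify a quantity Player~1 can increase or decrease by one using legal actions each turn, or better two such quantities. Examples are counters on a card, the number of tokens or banished copies, or life point values adjusted by fixed amounts. Two counters give a Minsky machine, which is Turing-complete. (3) \emph{The simulation.} The strategy $\sigma_e$ reads the history, reconstructs the configuration of the two-counter machine $M_e$ simulating $\varphi_e(e)$, and performs one step per turn by incrementing, decrementing or zero-testing the counters. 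As long as $M_e$ has not halted, $\sigma_e$ plays the neutral loop and does not attack. When $M_e$ halts, $\sigma_e$ switches to a finishing line that wins, for example by a burn or OTK sequence kept available on the board. Clearly $\sigma_e$ is computable uniformly in $e$. It uses finitely many operations per turn, and there is no time limit. If $\varphi_e(e)$ never halts, the play is infinite and Player~1 never wins, so $\sigma_e$ is not winning.

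The main obstacle is step (1) together with the universal quantifier over the opponent in ``winning''. We must guarantee that no opponent response, including hand traps, chain responses, or the opponent decking out, can end the game prematurely or in Player~1's favour. Such an ending would make $\sigma_e$ winning regardless of $e$, or make it lose. We will handle this in two ways. We fix $s_0$ so that the opponent's deck, hand and field are irrelevant or empty, and their deck is stocked so that they never deck out. We also build a floodgate lock so that the only legal opponent moves are passes. If the opponent drawing could still deck them out, we use a recycling effect so that Player~1 resets deck sizes every turn. Verifying these card interactions against the current Forbidden \& Limited List is the delicate, case-by-case part. The computability argument itself is routine once the gadget exists.
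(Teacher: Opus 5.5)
Your reduction has the same shape as the paper's. A computable map $e\mapsto(\text{start},\sigma_e)$ produces a strategy $\sigma_e$ that plays a neutral turn while simulating $\varphi_e(e)$ and switches to a forced win when it halts. The paper differs only in details: it starts from an $e$-dependent run $\mathbf{C}_e$, and it stores the whole machine configuration in one register, the Spell Counters on \textsl{Magical Citadel of Endymion}, instead of two Minsky counters.

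The genuine gap is that you never produce the board. Step~(1) and the finishing line appear only as desiderata: a position reachable with a legal deck in which
\begin{itemize}
\item the opponent cannot act,
\item neither player can deck out,
\item Player~1's repeated turn never ends the game,
\item Player~1 can force a win against every reply whenever it chooses.
\end{itemize}
You explicitly postpone checking that actual cards achieve this. For this game, that is the entire content of the theorem. As you note yourself, the step from such a board to undecidability is routine, so what you have is only the conditional ``if such a board exists, the problem is undecidable''.

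The paper discharges exactly this in Section~\ref{secTuring}. An explicit 43-card deck reaches a position where:
\begin{itemize}
\item \textsl{Yata-Garasu}, attacking every turn, makes the opponent skip every Draw Phase;
\item \textsl{Vanity's Ruler} blocks their Special Summons, hence their Extra Deck;
\item their hand and Graveyard are empty;
\item \textsl{Offerings to the Doomed}, recycled via \textsl{Endymion, the Master Magician}, lets Player~1 skip its own Draw Phase with an empty deck;
\item the win is an all-out attack.
\end{itemize}

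Two side remarks. First, your step~(2) is unnecessary under your own definition of strategy. Since $\sigma_e$ sees the whole history, it can run $\varphi_e(e)$ internally for as many steps as turns have elapsed while repeating one fixed neutral turn. The board then only has to support step~(1) and the finishing line. In the paper, too, the counter is logically redundant, though its loops double as the mechanism that skips Player~1's Draw Phase.

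Second, the audit you defer is not a formality. A \textsl{Yata-Garasu} lock is triggered by battle damage, i.e.\ 200 points per turn against an opponent with an empty field. On its own, that ends the game after finitely many turns whether or not $\varphi_e(e)$ halts, and would make $\sigma_e$ winning for every $e$. In the paper's board this can be compensated by the \textsl{Upstart Goblin} of the increment loop, which gives the opponent 1000 Life Points per cycle. Whatever board you choose has to be checked for cumulative effects of this kind.
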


\noindent From the viewpoint of logic, it is also natural to ask about the exact complexity of the problem. Without entering into a detailed exposition of the lightface analytic hierarchy, recall that this hierarchy stratifies sets according to the alternation of universal and existential quantifiers in their definitions. We redirect to \cite{Montalban_2026} for a brief explanation of the lightface arithmetical hierarchy (contained in the introduction) and of the first level for the lightface analytical hierarchy (contained in Chapter IV). By giving a natural reduction from a $\Pi^1_1$-complete set to our problem, we obtain the following.

\begin{theorem}\label{realcomplexitytheorem}
Determining whether a computable strategy in \emph{Yu-Gi-Oh! TCG} is winning is a $\Pi^1_1$-complete problem.
\end{theorem}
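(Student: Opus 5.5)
The proof splits into two parts: an upper bound showing the set of (indices of) winning computable strategies is $\Pi^1_1$, and a lower bound reducing a known $\Pi^1_1$-complete set to it. For the lower bound I would use the set of indices $e$ of computable trees $T_e\subseteq\omminom$ that are well-founded (equivalently, computable linear orders that are well orders), which is the standard $\Pi^1_1$-complete set in \cite{Montalban_2026}.

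\textbf{Upper bound.} I would assume the Conjecture, so that game configurations can be coded by natural numbers and the transition function is computable. The finiteness assumption (finitely many operations per turn) then lets me code a play as an element of $\baire$, each turn being a finite sequence of coded moves. A computable strategy $\sigma_e$ for Player~1 is winning from configuration $c$ iff
\[
\forall x\in\baire\;\bigl(\text{$x$ is a legal play from $c$ consistent with $\sigma_e$} \rightarrow \exists n\,(\text{Player~1 has won by stage $n$ of $x$})\bigr).
\]
Both ``legal and consistent with $\sigma_e$'' (which is $\Pi^0_1$ in $x$, checking every finite prefix) and ``has won by stage $n$'' (decidable by the Conjecture) are arithmetical in $x,e,c$. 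Hence the whole condition is $\Pi^1_1$. One convention must be fixed here: an infinite play counts as a non-win for Player~1, since there is no time limit.

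\textbf{Lower bound.} Given $e$, I would compute a configuration $c_e$ and a computable strategy $\sigma_{f(e)}$ such that plays consistent with $\sigma_{f(e)}$ correspond to descending paths through $T_e$. The opponent's choices encode the next entry of a path $x(0),x(1),\dots$. Player~1 uses the unbounded storage mechanism (the same gadget used to simulate Turing machines for Theorem~\ref{maintheorem}) to record the current node $x\restriction n$ and to check, by simulating the computation of $T_e$, whether $x\restriction(n+1)\in T_e$. If the node leaves $T_e$, the strategy executes a forced win within finitely many turns. Otherwise it keeps the game going indefinitely, relying on a loop that prevents decking out and prevents either player from losing life.

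With this construction, $\sigma_{f(e)}$ is winning iff every path the opponent can produce eventually exits $T_e$, that is, iff $T_e$ is well-founded. Since $f$ is computable, this gives the $\Pi^1_1$-hardness.

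\textbf{Main obstacle.} The hard step is the gadget. I must ensure that the opponent's available legal actions can be made to encode an arbitrary natural number per ``round''. This could be done, for instance, by the number of times they choose to activate or pass, or by which of several responses they chain. I must also ensure that \emph{every} other deviation by the opponent either is harmless or leads to Player~1 winning, so the opponent cannot escape the simulation. I would first try to have Player~1's loop leave the opponent exactly one relevant binary choice per turn (respond or pass), encoding $x(n)$ in unary across consecutive turns with a delimiter. I would then argue, card by card from the chosen legal deck, that all remaining opponent interaction is neutralized, for example by locks that negate their effects.
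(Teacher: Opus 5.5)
Your upper bound is fine, and so is the overall shape of your lower bound: the opponent feeds an infinite sequence, and Player~1 checks a decidable condition on each finite prefix and wins as soon as it fails. Reducing from indices of well-founded computable trees is a legitimate and more standard substitute for the paper's set $\mathrm{NIS}$, which the appendix proves complete by essentially a tree argument.

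The genuine gap is the gadget. It is not a technicality you can defer: all of the $\Pi^1_1$-hardness comes from it, and the concrete design you propose cannot work. Suppose the opponent has one binary choice (respond/pass) per turn and $x(n)$ is written in unary across turns. Then the opponent can answer ``respond'' forever and never send the delimiter. This is a legal infinite play, since every turn is finite, and Player~1, still waiting for $x(n)$, never wins. So $\sigma_{f(e)}$ is not winning, whatever $T_e$ is.

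No refinement of Player~1's strategy fixes this, for a structural reason. With boundedly many relevant opponent options per turn, the tree of turn-by-turn plays consistent with a computable strategy is finitely branching. By K\"onig's lemma, the strategy is then winning iff that tree is finite with all leaves won, which is a $\Sigma^0_1$ condition. Gadgets of this kind therefore give at most $\Sigma^0_1$-hardness, as in Theorem~\ref{maintheorem}.

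The missing idea is that the opponent must be able to choose an arbitrary natural number \emph{within a single turn}. This exploits the assumption that a turn consists of finitely but unboundedly many operations, and it is what makes each round infinitely branching. It is already implicit in your upper-bound coding, where one entry of $x$ is a whole turn.

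The paper realizes this with the deck of Section~\ref{secMoreFlex}:
\begin{itemize}
\item Player~2 runs the Flint Lock Loop, moving \textsl{Flint} between the two copies of \textsl{Flint Lock} any finite number of times and gaining 1000 Life Points per move via \textsl{Morale Boost}.
\item Player~1 reads the Life Point gain of the previous turn as the chosen number.
\item Player~1 requires the loop to end with \textsl{Flint} back on \textsl{Clara \& Rushka, the Ventriloduo}, on pain of an immediate attack.
\item Once a check fails, Player~1 plays \textsl{Raigeki} to shut the gadget down and attacks for the win.
\end{itemize}
Without such a construction, together with a verification that the opponent has no other way to stall, your lower bound is not established.
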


\noindent Finally, extending the analysis to non-computable strategies is almost straightforward. For this result, it could be useful to the reader to have some knowledge also in classical Descriptive Set Theory, in particular of Chapter II.22 and of Chapter III.27 from \cite{kechris}. We present a Wadge-reduction from the set of well-orders on a countable domain, a well-known $\boldsymbol{\Pi}^1_1$-complete set (see \cite[Theorem 27.12]{kechris}), to our problem. Thus, we obtain:

\begin{theorem}
Determining whether a (not necessarily computable) strategy in \emph{Yu-Gi-Oh! TCG} is winning is a $\boldsymbol{\Pi}^1_1$-complete problem.
\end{theorem}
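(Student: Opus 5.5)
We prove the two halves of $\boldsymbol{\Pi}^1_1$-completeness separately.

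For the upper bound, we code strategies of Player~1 as elements of Baire space $\baire$, using the transition-computability assumption (the Conjecture) so that game configurations and legal moves are coded by natural numbers. A strategy $\sigma$ is winning from a fixed configuration iff for every $y\in\baire$ coding a sequence of opponent responses, the play generated by $\sigma$ and $y$ reaches a winning configuration for Player~1 after finitely many steps. Here "every play ends" includes every finite sequence of opponent moves, and each turn has only finitely many operations by our standing assumption. For fixed $\sigma,y$ the play is computable from $\sigma\oplus y$ via the transition function, so the inner condition $\exists n$ (the $n$-th configuration is a Player~1 win) is arithmetical, indeed $\Sigma^0_1$ in $\sigma\oplus y$. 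Hence the set of winning strategies has the form $\forall y\,\exists n\,R(\sigma,y,n)$ with $R$ computable, which is $\Pi^1_1$, and in particular $\boldsymbol{\Pi}^1_1$. Treating an illegal move by the opponent as a loss for the opponent keeps $R$ computable.

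For hardness, we give a continuous (Wadge) reduction from $\mathrm{WO}$, the set of codes $x\in 2^{\omega\times\omega}$ of well-orders on $\omega$, which is $\boldsymbol{\Pi}^1_1$-complete by \cite[Theorem 27.12]{kechris}. Equivalently, we may reduce from the set of well-founded trees on $\omega$, which is also complete. Given a tree $T$, we build a Player~1 strategy $\sigma_T$ from a fixed starting configuration using one of the legal decks. In this configuration the opponent repeatedly "chooses" a natural number, e.g.\ by the number of times they respond, or by the choice of a loop-able effect. Player~1 stores the current node $s\in T$ as an unbounded counter, using the same arbitrarily-large-storage gadget as in the proof of Theorem~\ref{realcomplexitytheorem} (for instance copies of tokens or counters), and keeps the game alive through a loop that can be repeated indefinitely. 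When the opponent proposes an extension $s^\frown n$, the strategy checks whether $s^\frown n\in T$. If it is not, Player~1 executes the finishing combo and wins. If it is, Player~1 updates the stored node and passes to the next round. Every play is then won by Player~1 iff $T$ has no infinite branch, i.e.\ iff $T$ is well-founded. The map $T\mapsto\sigma_T$ is continuous, since the move $\sigma_T$ makes at a finite history depends only on finitely many bits of $T$.

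The main obstacle is the game-engineering step. We must ensure that the opponent really can encode an arbitrary natural number each round, and that the opponent cannot escape the gadget or force a win by some other legal action. The opponent's alternatives include decking out, conceding, or interrupting with their own cards, and the game is not zero-sum in the usual sense. I would handle this by fixing an opponent-side configuration with empty or locked resources, reusing the lock pieces from the Halting-problem reduction of Theorem~\ref{maintheorem}. I would also adopt the convention that infinite plays count as non-wins for Player~1. The remaining bookkeeping, namely that the counter loop preserves the deck size so that neither player decks out, is routine but must be checked card by card.
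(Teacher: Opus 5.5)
Your proposal is correct and follows essentially the same route as the paper. The upper bound is the same $\forall\exists$ computation as in Section~\ref{secCoCompl}, which you make explicit. Hardness is a Wadge reduction in which, from the Section~\ref{secMoreFlex} board, the opponent supplies numbers that Player~1 checks against finitely many bits of the input code, winning via \textsl{Raigeki} plus attacks as soon as a check fails. The paper phrases it with $\mathrm{WO}$ (the opponent proposes a $<_x$-descending sequence) rather than with well-founded trees. It also does not need your counter-storage of the current node, since strategies already remember the whole run.
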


We believe that our techniques could likely be adapted to show the same result about winning stetegies in \emph{Magic: The Gathering}.
It would be interesting to know whether the same holds for other card games (like \emph{Pokémon TGC}).  

\smallskip

\noindent The structure of the paper is as follows. In Section~\ref{secTuring}, we show that, using a specific deck, Player~1 can play according to the instructions of any Turing machine. In Section~\ref{secUnd}, we provide a precise formalization of the problem and prove that it is undecidable. In Section~\ref{secMoreFlex}, we present a variant of the previous deck that gives Player~2 the ability to choose a natural number whenever Player~1 requests it. In Section~\ref{secCoCompl}, we show that it is $\Pi^1_1$-complete. Finally, in Section~\ref{secDST}, we consider the problem for strategies that can be non-computable, and we classify it, proving (not so surprisingly) that in this case the problem is $\boldsymbol{\Pi}^1_1$-complete (or co-analytic complete). In the appendices, we give proof of a result used in Section~\ref{secCoCompl}.
\section{The main construction}\label{secTuring}





Unlike \cite{mtgTuringComp}, to get our result, we do not embed a particular universal Turing machine into a play of \emph{Yu-Gi-Oh! TCG}. Instead, we code the configurations for any Turing machine in the number of spell counters, and its program will be handled by the strategy. In particular, we show that can be reached a legal configuration from which one player can store an arbitrary number each turn and can play following as strategy the instructions of any Turing machine while considering the stored number as an encoding of the configuration of such Turing machine (i.e.\,the content on the tape, the position of the head, and the current state)\footnote{This information can be coded as finite strings in $\omega^{<\omega}$, moreover, we chose by convention that the encoding assigns $0$ to the empty string.}. This suffices to reduce the Diagonal Halting Problem to the considered problem, which thus is (at most) undecidable. 

\subsection{The board}\label{subsecSet}
We now describe the configuration that we desire, and which the player going first (whom we call Player 1) can reach. Player~2 has no cards in their hand, on the field, in the Graveyard, or banished (except for the trap card \textsl{Pole Position} active on their side of the field). 
At the same time, Player~1 controls on the Field Zone, the card \textsl{Magical Cytadel of Endymion} with no spell counters, the continuous spell \textsl{Temple of the Kings} activated, and the trap card \textsl{Localized Tornado} set face-down. On their field, they also have two copies of \textsl{Magician of Faith}, one copy of \textsl{Mask of Darkness}, and one \textsl{Vanity's Ruler}, all faced up. Among the cards in their graveyard, Player~1 needs the following two: the Effect Monster \textsl{Endymion, the Master Magician}, and the Quick-Play Spell \textsl{Offerings to the Doomed}. 
In their hand, they have the following five cards: \textsl{Yata-Garasu}, \textsl{Bait Doll}, \textsl{Upstart Goblin}, \textsl{Book of Eclipse}, and \textsl{Desert Sunlight}. In the Extra Deck, as in the Main Deck, there are no cards.

\subsection{How to set the board}
Before proving that with this board we can emulate the operations of any Turing machine (and keep track of them), we prove that this is legal (i.e., it can be reached from the start of the game if the first player wants). An example of a legal (i.e., which satisfies the condition indicated in \cite{rulebook}) 43-card deck that is capable of reaching this configuration in a game (potentially against any deck) can be seen in the following Table.

\begin{table}[h!]
\scalebox{0.85}{ \centering
\begin{tabular}{|l|l|l|}
\hline
\textbf{Monsters} & \textbf{Spell} & \textbf{Trap} \\
\hline
\textbf{Monsters in Main Deck} & 3 Double Summon & 1 Soul Drain \\
1 Protector of the Sanctuary & 3 Pot of Benevolence & 2 Desert Sunlight \\
1 Morphing Jar  & 1 Gold Moon Coin & 3 Localized Tornado \\
1 Yata-Garasu & 3 Soul Release  & 1 Massivemorph \\
1 Vanity’s Ruler & 3 Rain of Mercy & 2 Gift Card  \\
1 Endymion, the Master Magician & 3 Upstart Goblin  & 1 Pole Position \\
3 Magician of Faith &  1 Mystical Space Typhoon &  \\
1 Mask of Darkness  & 1 Pot of Desires  & \\
\cline{1-1}
\textbf{Monsters in Extra Deck} &  1 Pot of Duality & \\
 & 1 Offerings to the Doomed & \\
 & 1 Magical Citadel of Endymion & \\
  & 1 Temple of the Kings & \\
  & 1 Bait Doll & \\
& 1 Book of Eclipse & \\
\hline
\end{tabular}
\par} 
\caption{A possible deck}
\end{table}

To set the board, the player needs:
\begin{itemize}
    \item Going first.
    \item The correct draw on the first hand (actually, the correct order of the other cards in the deck is important too) made of \textsl{Double Summon}, \textsl{Protector of the Sanctuary}, \textsl{Morphing Jar}, \textsl{Soul Drain}, and \textsl{Desert Sunlight}.
    \item On the first turn, they have to summon face-down the two monsters (using \textsl{Double Summon}) and set the two trap cards. Then, on the Standby Phase of the opponent, they have to activate \textsl{Desert Sunlight} with \textsl{Soul Drain} chained.
    In this way, \textsl{Desert Sunlight} will force the activation of \textsl{Morphing Jar} and both player have to discard their entire hands. However, thanks to \textsl{Protector of the Sanctuary}, the opponent will not be able to draw any cards (and if they have not chained any effect to the previous two traps to stop that strategy, they cannot play unless summoning monsters from the Extra Deck).
    \item The correct draw on the new five cards is: \textsl{Yata-Garasu}, \textsl{Vanity’s Ruler}, \textsl{Double Summon}, and two copies of \textsl{Pot of Benevolence}.
    With them, it can normal summon \textsl{Yata-Garasu} and \textsl{Vanity’s Ruler} (blocking in this way also the opponent's Extra Deck), and return to the opponent's Deck four of the cards sent to the Graveyard. Then, attacking in the Battle Phase with \textsl{Yata-Garasu}, we prevent the opponent from drawing new cards.
    \item From now on, on each turn Player~1 will summon and attack with \textsl{Yata-Garasu}, and will activate the card that they draw if it is a magic card, and a possible order for the cards is: \textsl{Pot of Benevolence} (to leave the opponent without cards in the GY), \textsl{Massivemorph}, \textsl{Pole Position}, \textsl{Gold Moon Coin} (to give the previous two trap cards to the opponent player\footnote{To lock completely the opponent's Extra Deck, we suppose that they play these trap cards targeting \textsl{Vanity's Ruler} with the first one (as we need to make \textsl{Vanity's Ruler} immune by spell cards, as we will use \textsl{Book of Eclipse} for our construction). Since we are considering legal configurations, this is not a problem for our goal.}), \textsl{Endymion, the Master Magician}, \textsl{Localized Tornado}, \textsl{Mystical Space Typhoon} (to get rid of \textsl{Soul Drain}), \textsl{Localized Tornado}, \textsl{Rain of Mercy}, \textsl{Soul Release} (to reduce the number of cards that Player 1 have in the graveyard and hence that will be reshuffled in the deck after using \textsl{Localized Tornado}\footnote{The cards to remove are: \textsl{Protector of the Sanctuary}, \textsl{Morphing Jar}, \textsl{Soul Drain}, \textsl{Desert Sunlight}, and \textsl{Mystical Space Typhoon}.}), \textsl{Pot of Desires} (to reduce the deck of 10 other cards and draw two additional cards -the right cards to draw are \textsl{Offerings to the Doomed} and \textsl{Double Summon} which the player will not activate-).
    \item At this point of the match, in the deck remain only 12 cards that we need to set the board. The following table indicates which card the player draw and which action they have to do on the same turn (in addition to summon and attack with \textsl{Yata-Garasu})\footnote{We recall that in \emph{Yu-Gi-Oh! TCG} if a player has 7 or more cards has to discard until they have only 6 cards, so we take into consideration this constraint.}:

    \begin{table}[h!]
        \scalebox{0.8}{
        \centering
        \begin{tabular}{|m{4.5cm}|m{10cm}|}
            \hline
            \textbf{Drawn card} & \textbf{Action to do} \\
            \hline
            \textsl{Magical Citadel of Endymion} & Set it face-down.\\
            \hline
            \textsl{Temple of the Kings} & Set it face-down.\\
            \hline
            \textsl{Magician of Faith} & Use \textsl{Double Summon} and summon it face-down.\\
            \hline
            \textsl{Magician of Faith} & Flip the other copy of \textsl{Magician of Faith} to recycle \textsl{Double Summon} from the GY so that the new copy can be summoned face-down.\\
            \hline
            \textsl{Mask of Darkness} & Again, flip \textsl{Magician of Faith} to recycle \textsl{Double Summon} from the GY so that \textsl{Mask of Darkness} can be summoned.\\
            \hline
            \textsl{Bait Doll} & \\
            \hline
            \textsl{Upstart Goblin} & At the end of the turn, discard one copy of \textsl{Localized Tornado}.\\
            \hline
            \textsl{Book of Eclipse} & Set it face-down.\\
            \hline
            \textsl{Desert Sunlight} & Again, at the end of the turn, discard one copy of \textsl{Localized Tornado}.\\
            \hline
            \textsl{Localized Tornado} & Set it face-down.\\
            \hline
        \end{tabular}
        }
    \end{table}
    \item At this point, Player~1 has to activate first \textsl{Temple of the Kings} and then \textsl{Magical Citadel of Endymion}, and so the desired configuration is reached.
\end{itemize} 

\subsection{How to increase/decrease the counter}

We now explain how Player~1 can increase/decrease by any quantity the Spell Counters on the Field Spell \textsl{Magical Citadel of Endymion}.
To increase them, we have to activate spell cards, and in particular, we will use the board to recycle three spell cards and a trap card: \textsl{Bait Doll}\footnote{Which actually put itself in the deck after the activation.}, \textsl{Upstart Goblin}, \textsl{Book of Eclipse}, and \textsl{Desert Sunlight}.
Activating any of the spell cards listed before increases the number of Spell Counters only by 1, but the following order of actions allows the player to repeat it an unlimited number of times. First, activate \textsl{Book of Eclipse} to put all monsters on the field face-down, then set \textsl{Desert Sunlight} and activate \textsl{Bait Doll} on it. This would force the activation of the trap card, whose timing activation is right thanks to the presence of \textsl{Temple of the Kings}, and so will face-up in defense position all monsters on the field, activating their flip effects. In this way, the player can recycle from the GY: \textsl{Book of Eclipse} and \textsl{Upstart Goblin}\footnote{We do so only if it is in the GY; otherwise does not matter which spell card is recycled.} (thanks to the two copies of \textsl{Magician of Faith}), and \textsl{Desert Sunlight} (thanks to \textsl{Mask of Darkness}). Finally, using \textsl{Upstart Goblin}, the player can draw the card \textsl{Bait Doll} used during the loop (that returned to the deck after its activation) and restart the loop. One complete cycle increases the number of Spell Counters by 3, and since, as we will see below, we may assume that the player skips their next Draw Phase, this cycle can be stopped at any point. So the number of Spell Counters can be increased as needed. To decrease the number of Spell Counters we use (or better, we summon) \textsl{Endymion, the Master Magician}, which can be special summoned by removing 6 Spell Counters from the \textsl{Magical Citadel of Endymion}. Actually, we also need to remove this monster from the field to resummon it again. To this extent, we use the card \textsl{Offerings to the Doomed}, which allows us to destroy \textsl{Endymion, the Master Magician}, and to skip the next Draw Phase, although it increases the Spell Counters by 1. As \textsl{Endymion, the Master Magician} can also be summoned from the GY, and each time that is summoned, we can recycle any spell from the GY, we can repeat this loop, decreasing each time by 5 Spell Counters until remain only less than 6 such counters.
It is clear that combining the two loops (which do not interfere), we can modify the number of counters to any number in $\mathbb{N} \setminus \{0\}$, leaving on the field only the three flip monsters and \textsl{Vanity’s Ruler} (so leaving space to summon \textsl{Yata-Garasu})\footnote{As the encoding that we have chosen for the finite strings assigns $0$ to the empty strings and we will never deal with the empty string (since any Turing machine always has a state), this is not a restriction.}. Moreover, the player can execute at the start of each turn both loops to increase the Spell Counters by 1 and skip the next Draw Phase.
\section{The game is undecidable}\label{secUnd}

In order to answer Question~\ref{mainproblem}, we first need to formalize it as mathematical problem. 
Two ingredients are required.  

\medskip

The first one is the \emph{configuration}. Informally, it can be seen as a ``screenshot'' of the game from the point of view of one of the two players (for example,   Player~1).

\begin{definition}
    A \emph{legal configuration} $C$ is an element of $\omminom$ that encodes all the information known by one player about the current state of the game.
\end{definition}
\begin{notation}
    We write $\mathcal{C}$ for the set of all legal configurations.
\end{notation}
The definition above is somewhat informal because the amount of information involved is quite large and would be too pedantic to write down explicitly. 
For example, a detailed but not exhaustive list of all these elements includes: the Life Points of both players, the current Phase, the number of turns played, the number of cards in each Deck, in the Graveyards, and in the Extra Decks, as well as the cards and their positions in the Monster Zones and Spell \& Trap Zones, and the effects of all the cards that are active on the current Phase (even if they were activated in a previous turn).
We also encode the number of counters on each card, where tokens are placed on the Field, and whether the player knows any cards in the opponent’s Deck, Hand, or Field.

\begin{remark}
    Each card can be represented by its unique identifier in a fixed enumeration of all \emph{Yu-Gi-Oh! TCG} cards.
\end{remark}

Since the amount of information is always finite, a configuration can indeed be represented as (or coded by) a finite string, i.e., an element of $\omminom$. Between the legal configurations, we call \emph{initial configurations} the legal configurations that correspond to possible configurations that start a match in \emph{Yu-Gi-Oh! TCG}, i.e., with both players with 8000 Life Points, with no banned cards in the GY, with two decks with the right number of cards, etc...

\medskip

The second ingredient is a precise formulation of the notion of \emph{strategy}. 

A strategy is closely related to the concept of a \emph{legal run}, but before this, we need to introduce the following definition:

\begin{definition}
    A \emph{legal move} is any action that a player is allowed to perform according to the official Rulebook (see \cite{rulebook}) from a legal configuration (and which changes it).
\end{definition}

\begin{definition}
    A \emph{legal run} is a sequence of legal configurations $\overline{C} \in \mathcal{C}^{<\omega}$ such that:
    \begin{itemize}
        \item $C_0$ is an initial configuration
        \item $C_{i+1}$ can be obtained from $C_i$ by a legal move (performed by either of the two players) for any $i < \operatorname{l}(\overline{C})-1$. 
    \end{itemize}
\end{definition}

\begin{notation}
    We write $\mathcal{R}$ for the set of all legal runs.
\end{notation}

\begin{remark}
    The set $\mathcal{R}$ is $\Sigma^0_1$, indeed:
    \begin{align*}
    \overline{C} \in \mathcal{R} \iff & C_0 \text{ is an initial configuration } \land \forall j < \operatorname{l}(\overline{C}) (C_j \in \mathcal{C}) \land \\
    & \forall i < \operatorname{l}(\overline{C})-1 \exists M \in \omega ( M \text{ codes a move } \land t(C_i, C_{i+1}, M) )    
    \end{align*}
    where $t$ is the computable predicate that checks wheter a transition between two legal configurations is legit (which we assume to exists). Moreover, notice that: 
    \begin{itemize}
        \item As there is only a finite number of initial configurations, the first condition is computable.
        \item Similarly, as there are only a finite number of cards and the field is finite, checking if a configuration is in $\mathcal{C}$ is computable.
        \item Finally, as there are only finitely many moves from any configuration, we can code them using natural numbers in a computable way.
    \end{itemize}
\end{remark}

We now describe our notion of a strategy. This is based on two main ideas.  
First, a player’s strategy must take into account everything that has happened during the game; thus, it must retain memory of all configurations encountered so far.  
Second, at the moment, we consider only strategies implemented by a computer. Therefore, we require that a strategy be a computable function. 

\begin{definition}
    A computable \emph{strategy for Player 1} is a computable partial function
    $$
        f : \mathcal{R} \rightharpoonup \mathcal{C}, \quad 
        (C_1, \dots, C_n) \mapsto C
    $$
    such that $C$ is a legal configuration that can be reached from $C_n$ by Player 1 performing one legal move.
\end{definition}
One can similarly define the strategy for Player 2 (i.e., the one starting as the second player).
\begin{notation}
    We denote with $\mathcal{S}$ the set of all computable strategies.
\end{notation}

\begin{remark}
    During the opponent’s turn, the strategy may allow the player to pass or to activate a Trap Card, a Quick-Play Spell Card, or an ``Hand Trap".
\end{remark}
Now we are ready to define the problem in a proper way.

\begin{definition}
Let $\overline{C}\in\mathcal{R}$ and $S \in \mathcal{S}$. We say that a computable strategy $S \in \mathcal{S}$  starting from $\overline{C}$ is \emph{winning for Player~1} if it leads to the victory of the game, regardless of Player 2's moves.
\end{definition}

Again, a similar definition can be given for Player 2 (the player going second). But, as we only consider (winning) strategies for the starting player, from now on, we do not specify it.
\begin{notation}
A strategy $S \in \mathcal{S}$ starting from $\overline{C} \in \mathcal{R}$ that is winning is denoted by $\overline{C} \overset{S}{\rightarrow}V$.
\end{notation}

\begin{problem}
    Given a legal run $\overline{C} \in \mathcal{R}$ and a computable strategy $S \in \mathcal{S}$, it is decidable whether $\overline{C} \overset{S}{\rightarrow}V$?
\end{problem}

\subsection{The reduction of the halting problem}

\begin{definition}
    Let $e \in \omega$.We call \emph{$e$-run}, and we denote with $\mathbf{C}_e$, the legal run exposed in  Subsection \ref{subsecSet}, which leads to the configuration with the number of Spell Counter on the \textsl{Magical Citadel of Endymion}, which corresponds to the input $e$ on the tape of the $e$-th Turing machine.
\end{definition}

\begin{definition}
    Let $e \in \omega$. We call \emph{$e$-strategy}, and we denote with $\mathbf{S}_e$, the following computable strategy. From the run $\mathbf{C}_e$ the strategy follows the computation of the $e$-th Turing machine starting from the input $e$, using the coding explained in Section \ref{secTuring}, while summoning and attacking each turn with \textsl{Yata-Garasu}. If the computation halts, then Player~1 attacks with all monsters until the opponent is defeated.
\end{definition}

\begin{remark}
    These strategies are well-defined in the sense that they can be executed by Player~1, since the opponent cannot draw any new cards by the effect of the monster card \textsl{Yata-Garasu}.
\end{remark}

\begin{theorem1.1}
Deciding whether a computable strategy in \emph{Yu-Gi-Oh! TCG} is winning is an undecidable problem.
\end{theorem1.1}

\begin{proof}
We prove that the Halting Problem (which is $\Sigma^0_1$-complete) many-reduces to $\{(\overline{C}, S) \mid \overline{C} \overset{S}{\rightarrow}V \}$ and, therefore, this problem is undecidable (as $\Sigma^0_1$-hard).\\
Consider the function $f: \omega \to \mathcal{R} \times \mathcal{S}$ that associate to each $e$ the pair $(\mathbf{C}_e,\mathbf{S}_e)$. Notice that this function is computable, as we suppose that \emph{Yu-Gi-Oh! TCG} is
transition-computable and the moves coded by $\mathbf{S}_e$ are simply the translations of the instructions of the Turing machine with code $e$ together with the instruction to attack if the computation halts. Hence, said $K$, the Diagonal Halting Problem, this function witnesses that $K \le_m \{(\overline{C}, S) \mid \overline{C} \overset{S}{\rightarrow}V \}$, thus the result follows.
\end{proof}
\section{A more flexible construction}\label{secMoreFlex}

We present here a slightly different deck that will be used in the next section to prove that the set $\{(\overline{C}, S) \mid \overline{C} \overset{S}{\rightarrow}V \}$ is $\Pi^1_1$-complete. 

\subsection{The board}

The configuration that we desire to reach is similar to the one explained in section \ref{secTuring} (especially for Player~1); however, it is more complicated, as for this construction, the goal is to allow the opponent (Player~2) to choose numbers that interact with what Player~1 is playing. In particular, the configuration for Player~1 is the same as in section \ref{secTuring}, but in addition: on the Extra Monster Zone (no matter which) it controls \textsl{Starving Venemy Dragon}, they have on the field the continuous spell card \textsl{Attraffic Control} active, and in their hand they have the card \textsl{Raigeki}. At the same time, Player~2 has\footnote{Tecnically we will see that Player~2 is forced to have these cards by Player~1, and in particular, all cards they control are from Player~1's deck.} two copies of \textsl{Flint Lock}, and one copy each of \textsl{Protector of the Sanctuary} and \textsl{Clara \& Rushka, the Ventriloduo} (placed not in the Extra Monster zone). On the Spell \& Trap Zones, they control the equip cards \textsl{Flint} and \textsl{Mist Body} equipped to \textsl{Clara \& Rushka, the Ventriloduo}, the continuous trap card \textsl{Pole Position}, and the continuous spell card \textsl{Morale Boost}. Moreover,they have no cards in hand, in the Graveyard, or banished.

\subsection{How to set the board}

We now exhibit a legal 48-card deck for Player~1 that is capable of reaching this configuration in a game against any deck (if the opponent does not interrupt the play). That is, this deck witnesses that the desired configuration is legal.
 
\begin{table}[h!]
\scalebox{0.85}{ \centering
\begin{tabular}{|l|l|l|}
\hline
\textbf{Monsters} & \textbf{Spell} & \textbf{Trap} \\
\hline
\textbf{Monsters in Main Deck} & 2 Double Summon & 1 Soul Drain \\
1 Protector of the Sanctuary & 3 Pot of Benevolence & 1 Desert Sunlight \\
1 Morphing Jar  & 1 Mystical Space Typhoon & 3 Localized Tornado \\
1 Yata-Garasu & 1 Morale Boost  & 3 Give and Take \\
1 Vanity’s Ruler & 1 Mist Body & 1 Reverse Reuse  \\
1 Endymion, the Master Magician & 3 Gold Moon Coin & 1 Pole Position \\
2 Flint &  1 Magical Stone Excavation & 2 Massivemorph \\
2 Magician of Faith & 1 Attraffic Control  & \\
1 Odd-Eyes Pendulum Dragon & 1 Creature Swap & \\
1 Mask of Darkness & 1 Polymerization & \\
\cline{1-1}

 \textbf{Monsters in Extra Deck} &  1 Flint & \\
 1 Clara \& Rushka, the Ventriloduo & 1 Raigeki & \\
 1 Starving Venemy Dragon & 1 Offerings to the Doomed & \\
& 1 Magical Citadel of Endymion & \\
& 1 Temple of the Kings & \\
& 1 Bait Doll & \\
& 1 Upstart Goblin & \\
& 1 Book of Eclipse & \\
\hline
\end{tabular}
\par} 
\caption{A 48-card deck to many-reduce a $\Pi^1_1$-complete set to our problem}
\end{table}

To set the board, the first part of the instructions (i.e., up to the first 18 cards in the deck) is identical to the ones explained in section \ref{secTuring}. As before, Player~1, who is going first, will summon and attack each turn with \textsl{Yata-Garasu}. The next 19 cards are focused on building the opponent's board. We list them with the corresponding action aside:

\begin{table}
        \scalebox{0.8}{
        \centering
        \begin{tabular}{|m{4.5cm}|m{10cm}|}
            \hline
            \textbf{Drawn card} & \textbf{Action to do} \\
            \hline
            \textsl{Flint Lock} & \\
            \hline
            \textsl{Flint Lock} & \\
            \hline
            \textsl{Magical Stone Excavation} & Activate it by sending the previous two monster cards in the GY and recycling a copy of \textsl{Double Summon}.\\
            \hline
            \textsl{Magician of Faith} & Use \textsl{Double Summon} and summon it face-down.\\
            \hline
            \textsl{Mist Body} & \\
            \hline
            \textsl{Boost Morale} & \\
            \hline
            \textsl{Attraffic Control} & Activate it, flip \textsl{Magician of Faith} to recycle \textsl{Double Summon} from the GY, then special summon \textsl{Clara \& Rushka, the Ventriloduo}.\\
            \hline
            \textsl{Reverse Reuse} & Set it face-down.\\
            \hline
            \textsl{Give and Take} & Set it face-down.\\
            \hline
            \textsl{Give and Take} & Set it face-down.\\
            \hline
            \textsl{Give and Take} & Set it face-down.\\
            \hline
            \textsl{Gold Moon Coin} & \\
            \hline
            \textsl{Creature Swap} & Activate \textsl{Reverse Reuse} to summon face-down on opponent's field \textsl{Magician of Faith}, and then use \textsl{Creature Swap} to change it with \textsl{Clara \& Rushka, the Ventriloduo}. 
            Activate \textsl{Gold Moon Coin} to give \textsl{Morale Boost} and \textsl{Mist Boby} to your opponent. We suppose that they activate them (in particular, the Equip Card needs to be equipped to the link monster).
            Finally, activate in chain all the copies of \textsl{Give and Take} summoning on the opponent's field, \textsl{Protector of the Sanctuary}, and two copies of \textsl{Flint Lock} all face-up in defense position. \\
            \hline
            \textsl{Odd-Eyes Pendulum Dragon} & Again, flip \textsl{Magician of Faith} to recycle \textsl{Double Summon} from the GY. \\
            \hline
            \textsl{Polymerization} & Activate it to summon \textsl{Venemy Starving Dragon} using \textsl{Odd-Eyes Pendulum Dragon} and \textsl{Endymion, the Master Magician} as fusion material. Then use the effect of \textsl{Venemy Starving Dragon} to negate the effects of \textsl{Protector of the Sanctuary} controlled by the opponent.\\
            \hline
            \textsl{Massivemorph} & \\
            \hline
            \textsl{Flint} & \\
            \hline
            \textsl{Gold Moon Coin} & Activate it to give \textsl{Massivemorph} and \textsl{Flint} to your opponent. Again, we suppose they activate them. In particular, we suppose that the equip card is equipped to the link monster, and \textsl{Massivemorph} is activated targeting \textsl{Venemy Starving Dragon}.\\
            \hline
            \textsl{Raigeki} & \\
            \hline
        \end{tabular}
        }
    \end{table}

In this way remain in the Main Deck only 9 cards, which, together with \textsl{Double Summon}, which is already in Player~1's hand, and \textsl{Magician of Faith} on the field, correspond to the last 11 cards presented in the strategy in section \ref{secTuring}, and indeed, the last part of the construction concerning them is basically the same. As already mentioned, the opponent player are not really forced to play the cards we give them; however, as we are interested only in showing that the desired configuration is legal, this is not restrictive.

\subsection{How to make your opponent choose a number}

We now present how Player~2 can ``choose a number" by increasing their Life Points. Notice that althought Player~2 can increase their Life Points by an arbitrary number (multiple of 1000) they have two restrictions: first, by our assumption on the game rules, they can increase it only by a finite number each turn (since they can perform only a finite number of actions), and second Player~1 can always take away this possibily from them by using the card \textsl{Raigeki}.
Therefore, there is no risk that Player~2 continues to increase their Life Points, stalling the game.
To increase Life Points, Player~2 needs to change the target of the equip card \textsl{Flint} using the effect of \textsl{Flint Lock}, which not only allows them to change the target from \textsl{Clara \& Rushka, the Ventriloduo} to itself, but then allows them to swap the target using the other copy of itself an arbitrary number of times. Each time, changing the target will trigger the effect of \textsl{Morale Boost}, which increases by 1000 points Player~2's Life Points.\footnote{This is a famous combo named \emph{Flint Lock Loop}.} To avoid to destroy \textsl{Flint} during the activation of \textsl{Book of Eclispe}, we force Player~2 to choose as the last target of it \textsl{Clara \& Rushka, the Ventriloduo} that cannot be set face-down (recall that when an Equip Card is assigned to a monster, then it is destroyed if the monster changes its position to covered face-down). By ``forcing Player~2", in this case, we mean that the winning strategy that we build make Player~1 attack until reaching the winning condition if the opponent do not do such action.

We observe that this group of actions almost does not interfere with the two loops used by Player~1 to increase and decrease the Spell Counter on the card \textsl{Magical Citadel of Endymion} (because changing the target of a spell card does not count as a new activation). The only problem is that, having four monsters on his side of the field, Player~2, during the End Phase of Player~1’s turn, have to draw the same amount of cards by the effect of \textsl{Book of Eclispe}. To avoid this side effect, Player~1 has to activate at the beginning of each turn the effect of \textsl{Starving Venemy Dragon} on \textsl{Protector of the Sanctuary}; in this way, they are free to draw cards from the main deck (performing the loop to increase Spell Counters) and avoid the side effect of \textsl{Book of Eclipse}. 
\section{What is the real complexity of our problem?}\label{secCoCompl}

A straightforward computations shows that the set $\{(\overline{C}, S) \mid \overline{C} \overset{S}{\rightarrow}V \}$ is $\Pi^1_1$, indeed:
\begin{align*}
  \{(\overline{C}, S) \mid \overline{C} \overset{S}{\rightarrow}V \} =& \{ (\overline{C}, S) \mid \
  \overline{C} \in \mathcal{R} \land  \forall p \in \baire  ([\forall n < \operatorname{l}(\overline{C})(p(n)=\overline{C}(n)) \land \\ 
  &\forall n \ge \operatorname{l}(\overline{C}) [\exists M \in \omega ( M \text{ codes a move } \land t(p(n), p(n+1), M) )\land  \\
  &\land (p(n) \neq p(n+1) \lor p(n+1) \neq p(n+2)) \land I(p \restriction n+1) \\
  &\implies S(p \restriction n+1) = p(n+1)] \\
  &\implies \exists m \in \omega (p(m) \text{ is a winning state for Player~1})]) \}  
\end{align*}
where $t \subseteq \omega^3$ is the computable predicate that checks whether a transition is legit (which exists by our assumptions), and $I \subseteq \omega^{<\omega}$ is the computable predicate checking if Player~1 can do something. Therefore, as checking if a current configuration of the game is winning is clearly computable, the thesis follows.

It is natural to ask whether this upperbound is sharp. We will devote the rest of this section to proving that this is the case, as we can reduce to it the following $\Pi^1_1$-complete set.

\begin{proposition}[\protect{\cite[Exercise I.32]{Montalban_2026}}]\label{exmontalban}
The set $\mathrm{NIS}$ of indices $e \in \omega$ for which there is no infinite sequence $\langle a_n: n \in \omega\rangle$ such that $e$-th Turing machine $\varphi_e$ on the input $a_{n+1}$ outputs $a_{n}$ for all $n \in \omega$ is $\Pi^1_1$-complete.
\end{proposition}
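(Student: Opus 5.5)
We prove membership in $\Pi^1_1$ and then $\Pi^1_1$-hardness, the latter by a computable many-one reduction from the index set of computable well-founded trees.

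\emph{Upper bound.} We have
\[
e \in \mathrm{NIS} \iff \forall p \in \baire\, \exists n\, \neg\bigl(\varphi_e(p(n+1))\!\downarrow = p(n)\bigr).
\]
The matrix $\exists n\,\exists s\,\neg(\ldots)$ is not quite right as written, because convergence is $\Sigma^0_1$. Its negation is $\forall n\,\exists s\,(\varphi_{e,s}(p(n+1))\!\downarrow = p(n))$, which is $\Pi^0_2$ in $p$. So $\mathrm{NIS}$ is the set of $e$ such that $\forall p$ of a $\Sigma^0_2$ condition. By the normal form for $\Pi^1_1$ (universal real quantifier in front of an arithmetic matrix), $\mathrm{NIS}$ is $\Pi^1_1$.

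\emph{Hardness.} Start from the classical $\Pi^1_1$-complete set $\mathrm{WF}=\{e : \varphi_e \text{ is the characteristic function of a well-founded tree } T_e\subseteq\omminom\}$, which we may assume is given by a computable sequence of computable trees. Fix a computable bijection $\langle\cdot\rangle:\omminom\to\omega$ with $\langle\emptyset\rangle=0$. By the $s$-$m$-$n$ theorem, build $g(e)$ so that $\varphi_{g(e)}$ works as follows on input $k$:
\begin{enumerate}
\item decode $k=\langle\sigma\rangle$;
\item if $\sigma\neq\emptyset$ and $\sigma\in T_e$, output $\langle\sigma^-\rangle$, where $\sigma^-$ is $\sigma$ with its last entry deleted;
\item otherwise, diverge.
\end{enumerate}
Here ``$\sigma\in T_e$'' is decided by running $\varphi_e$.

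Suppose $\langle a_n\rangle$ is an infinite sequence with $\varphi_{g(e)}(a_{n+1})=a_n$ for all $n$. Then every $a_n$ with $n\ge1$ codes a nonempty node of $T_e$, and the length drops by exactly one at each step down. So the lengths satisfy $|\sigma_{n+1}|=|\sigma_n|+1$, and $\sigma_n$ is the parent of $\sigma_{n+1}$. Hence $\bigcup_n\sigma_n$ is an infinite path through $T_e$.

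Conversely, an infinite path $x$ through $T_e$ yields the sequence $a_n=\langle x\restriction n\rangle$. For $n\ge1$ each $a_n$ codes a nonempty node of $T_e$, so $\varphi_{g(e)}(a_{n+1})=a_n$. At $n=0$ we have $x\restriction 1\neq\emptyset$ and $x\restriction 1\in T_e$, so $\varphi_{g(e)}(a_1)=\langle\emptyset\rangle=a_0$, as required.

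Therefore $e\in\mathrm{WF}\iff g(e)\in\mathrm{NIS}$. The reduction is stated for $e$ such that $\varphi_e$ is total and $T_e$ is a tree. To handle arbitrary $e$, replace $\mathrm{WF}$ by the equivalent form that uses indices of computable trees, or build $g$ so that a partial $\varphi_e$ simply leaves the corresponding nodes undefined; the standard formulation in terms of indices of computable trees avoids the issue.

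\emph{Main obstacle.} The delicate step is making sure that a descending sequence for $\varphi_{g(e)}$ cannot ``cheat.' One risk is repeating values: since $\varphi_{g(e)}(k)$ always has strictly smaller length than $\sigma$, a cycle is impossible. The other risk is jumping between branches: the output is determined by the input, so any chain read backwards is a path. Both points follow from the design, since each input has a unique, strictly shorter predecessor.
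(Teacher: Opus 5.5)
Your proof is correct and is essentially the paper's argument: send each nonempty node of a uniformly computable tree to its parent and diverge elsewhere, so that infinite $\varphi_{g(e)}$-descending sequences correspond exactly to infinite paths through the tree. The only difference is cosmetic: the paper builds the trees directly from Kleene's normal form as $T_n=\{s : \neg\varphi^{s}_{z_0}(n)\!\downarrow\}$, which is precisely the uniformly computable family of total trees you invoke to avoid the problem with partial indices $e$.
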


\begin{proof}
See appendix \ref{secApp}.
\end{proof}

\begin{theorem1.3}
Determining whether a computable strategy in the \emph{Yu-Gi-Oh! TCG} is winning is a $\Pi^1_1$-complete problem.
\end{theorem1.3}

\begin{proof}
Similarly to Theorem 1.1, we prove that $\{(\overline{C}, S) \mid \overline{C} \overset{S}{\rightarrow}V \}$ is $\Pi^1_1$-hard by presenting a many-reduction from $\mathrm{NIS}$ to it. Consider the function $f: \omega \to \mathcal{R} \times \mathcal{S}$ that associate to each $e$ the pair $(\mathbf{C}_\ast,\mathbf{S}_e)$, where $\mathbf{C}_\ast$ is the legal run which leads to the configuration explained in Subsection 4.1 and $\mathbf{S}_e$ is the strategy starting from the run $\mathbf{C}_\ast$ and then ask to opponent for a number which is read as a triple $\langle a_1, a_2, t_1 \rangle$ then compute $\varphi_e(a_2)[t_1]$ and:
\begin{itemize}
  \item if $\varphi_e(a_2)[t_1]\!\!\downarrow$, check whether $\varphi_e(a_2) = a_1$, if so then ask to opponent for a number which is read as a pair $\langle a_3, t_2 \rangle$ and repeat the previous control for the triple $\langle a_2, a_3, t_2 \rangle$, otherwise activate \textsl{Raigeki} (to stop the possibility of increase lifepoints) and attack opponent's Life Points directly until reaching the winning condition.
  \item otherwise activate \textsl{Raigeki} and attack the opponent's Life Points until they become 0.
\end{itemize}
It is immediate to prove that this function is a reduction, indeed if $e \in \mathrm{NIS}$ then for any $a \in \baire$ resulting from a possible play of the opponent for some $n \in \omega$ either $\varphi_e(a_{n+1})\!\!\downarrow \neq a_n$ or $\forall s \in \omega \varphi_e(a_{n+1})[s]\!\!\uparrow$, and hence it is clear that an possible run ends in a victory for Player 1. Similarly, if $e \notin \mathrm{NIS}$, then there is some $a \in \baire$ such that
\[\forall n \in \omega \exists t_n \in \omega \varphi_e(a_{n+1})[t_n]\!\!\downarrow = a_n \]
therefore, Player 2 cannot lose while playing the pair $\langle a_{n+1}, t_n \rangle$.

As for Theorem 1.1, such a function is computable, thus $\mathrm{NIS} \le_m \{(\overline{C}, S) \mid \overline{C} \overset{S}{\rightarrow}V \}$.
\end{proof}

\begin{remark}
  We point out that the requirement about the ``memory" of the strategy of all previous turns is essential for our result. Indeed, Player~1's strategy needs to know the increase in Life Point happened during the previous turn. Although this may seem strange for \emph{Yu-Gi-Oh! TCG}, as for most of the cards, seems sufficient to require only memory for the actions that happened during the same turn (in addition to the card played and on the field, and the active effects). However, this is not really the case as there are cards that require memory of the actions that happened during the previous turn as \textsl{Life Absorbing Machine}, so our assumption is reasonable.
\end{remark}

\section{A more general problem}\label{secDST}

\noindent Now we move our attention to non-computable strategies.
Removing the word ``computable" from the definitions in section 3, we can ask when a strategy is winning from a certain run. With abuse of notation, we use $\overline{C} \overset{S}{\rightarrow}V$
to indicate when a strategy $S$ is winning starting from the run $\overline{C}$.

\begin{notation}
 Let $\overline{C}$ be a run and $n$ be the coding of a legal configuration reachable from $\overline{C}$. With $\overline{C}^\frown n$, we denote the run obtained from adding the move coded by $n$ after the last move of $\overline{C}$.
\end{notation}

We view the set of all strategies as a subspace of a Polish space (more precisely as subset of a closed subset of the Baire space $\omega^\omega$). Indeed, any strategy can be seen as a tree where each finite branch is of the form $\overline{C}^\frown i^\frown S(\overline{C}^\frown i)$ for some legal run $\overline{C}$ not in a winning condition, $i$ code of a legal move of Player~2, and $S(\overline{C}^\frown i)$ response of Player~1's strategy, or of the form $\overline{C}$ without children if $\overline{C}$ is a legal run with a winning condition. Moreover, the set of all trees $\operatorname{Tr}$ can be coded as a $0$-dimensional Polish space (see \cite[Exercise 4.32]{kechris}).

\begin{remark}
Since we see strategies as trees, the problem asking $\overline{C} \overset{S}{\rightarrow}V $ is the same as asking if the subtree $\{s \mid \overline{C}^\frown s \in S \}$ (also said the \emph{localization of $S$ at $\overline{C}$}) has no infinite branches and all leaves present a victory condition for Player~1.
\end{remark}

\begin{remark}
    By basically the same computation done in section \ref{secCoCompl}, we get that $\{(\overline{C}, S) \mid\overline{C} \overset{S}{\rightarrow}V \}$ is in $\boldsymbol{\Pi}^1_1(\omega^{<\omega} \times \operatorname{Tr})$.
\end{remark}

We want to show this set is $\boldsymbol{\Pi}^1_1$-complete. To do so, we define a Wadge-reduction from $\mathrm{WO}$, the subset of $\mathrm{LO}$ (i.e., of all countable linear orders on $\omega$) that are well-founded, which is a well-known $\boldsymbol{\Pi}^1_1$-complete set (see \cite[Theorem 27.12]{kechris}).

\begin{theorem1.4}
Determining whether a (not necessarily computable) strategy in \emph{Yu-Gi-Oh! TCG} is winning is a $\boldsymbol{\Pi}^1_1$-complete problem.
\end{theorem1.4}

\begin{proof}
    We define a continuous function $f: \mathrm{LO} \to \omega^{<\omega} \times \mathrm{Tr}$. For every $x \in \mathrm{LO}$, $f(x)=(\mathbf{C}_\ast, \mathbf{S}_x)$ where $\mathbf{C}_\ast$ is the legal run that leads to the configuration explained in Subsection 4.1, and the strategy $\mathbf{S}_x$ is defined as follows. 
    
    Starting from the run $\mathbf{C}_\ast$ (outside of the localization, it is not relevant if it is not winning, so we do not define it), we ask the opponent for a number which is read as a pair $\langle m,n \rangle$, then compute $x(m,n)$ and:
\begin{itemize}
  \item if $x(m,n)=1$, ask the opponent for a number $l \neq m$ and repeat the previous control for the triple $\langle l,m \rangle$; otherwise, activate \textsl{Raigeki} (to stop the possibility of increasing lifepoints) and attack the opponent's Life Points directly until they become 0.
  \item otherwise activate \textsl{Raigeki} and attack the opponent's Life Points until they reach 0.
\end{itemize}

Clearly, this is a reduction since $x \in \mathrm{WO}$ if and only if $\mathbf{C}_\ast \overset{\mathbf{S}_x}{\rightarrow}V$. Moreover, notice that given an $x \in \mathrm{LO}$, the output of our construction $(\mathbf{C}_\ast, \mathbf{S}_x)$ is actually computable in a uniform way, so this reduction is continuous. \qedhere
\end{proof}
\appendix

\section{Proof of Proposition \ref{exmontalban} }\label{secApp}
\newcommand\func[3]{#1:#2\to #3}

When $e\in\omega$ and $x\in\omega^\omega$, we write $\varphi_e^x(n)\downarrow$ if the $e$-th computable function with oracle $x$ converges on the input $n$, and $\varphi_e^x(x)[t]\downarrow$ if it converges in at most $t$ steps. Notice that this implies that the output may only depend on the initial string $(x_0,\dots ,x_{t-1})$ and not on further values of the oracle.

We shall need the following classic result, which is a corollary of \cite[Theorem 1]{Kleene1956}:

\begin{fact}[Kleene's Normal Form for $\Pi^1_1$]Given $S \in \Pi^1_1(\omega)$, then it can be described by a formula of the form
\[n\in S\:\Longleftrightarrow\:\forall x\in\omega^\omega \exists m \in \omega(P(x, m, n))\]
with a computable predicate $P$, and hence one can find an index $z_0 \in \omega$ so that
\[n\in S\:\Longleftrightarrow\:\forall x\in\omega^\omega(\varphi^x_{z_0}(n)\downarrow).\]
\end{fact}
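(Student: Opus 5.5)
The plan is to start from the definition of $\Pi^1_1(\omega)$: there is an arithmetical formula $\theta$, with a free function variable and a free number variable, such that $n\in S\iff\forall x\in\omega^\omega\,\theta(x,n)$. Put $\theta$ in prenex normal form, so its matrix is a computable (indeed primitive recursive) relation $R$ in $x$, $n$ and the bound number variables. The goal is to remove all number quantifiers except a single leading existential one. Each step will replace the formula by an equivalent one whose prefix still begins with a universal function quantifier, which is absorbed into $\forall x$ at the end.

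\textbf{Eliminating universal number quantifiers.} Work from the innermost block outward. The key transformation is the dual Skolem trick
\[\exists z\,\forall y\,R(z,y)\iff\forall g\in\omega^\omega\,\exists z\,R(z,g(z)).\]
For the direction from right to left, argue by contrapositive: if for every $z$ there is a $y$ with $\neg R(z,y)$, let $g(z)$ be the least such $y$. This is a choice over $\omega$, so no choice principle is needed. For the direction from left to right, use the $z$ given by the left-hand side.
\begin{itemize}
\item A universal number quantifier with no existential quantifier before it, $\forall y$, is replaced by $\forall g$ applied as $g(0)$.
\item Adjacent quantifiers of the same kind are merged by the computable pairing $\langle\cdot,\cdot\rangle$ on $\omega$ and the corresponding pairing on $\omega^\omega$: $\forall x\,\forall g$ becomes $\forall h$ with $x=(h)_0$ and $g=(h)_1$.
\end{itemize}
Each new universal function quantifier can then be pulled to the front, past the existential number quantifiers that precede it. For example, $\exists z\,\forall g\,\exists w\,A$ is turned into $\forall G\,\exists z\,\exists w\,A[g:=G(z,\cdot)]$, which is the same trick applied to a function-valued choice.

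\textbf{Obtaining the normal form.} Iterating these steps yields
\[n\in S\iff\forall x\,\exists m\,P(x,m,n),\]
with $P$ computable, because every substitution performed above is of the form ``evaluate $g$ at a computable argument''. For the second form, let $z_0$ be an index of the oracle machine that, on input $n$ with oracle $x$, searches $m=0,1,2,\dots$ and halts at the first $m$ with $P(x,m,n)$. Then $\varphi^x_{z_0}(n)\downarrow\iff\exists m\,P(x,m,n)$. Such a $z_0$ exists, and can be found effectively from an index for $P$, by the Church--Turing thesis or the $s$-$m$-$n$ theorem. The use principle guarantees that a convergent computation consults only a finite initial segment of $x$.

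\textbf{Main obstacle.} The delicate step is the bookkeeping in pulling the function quantifiers outward past existential number quantifiers in an arbitrary prefix. I expect to handle it by induction on the number of quantifier alternations, proving at each stage the equivalence $\exists z\,\forall g\,\psi\iff\forall G\,\exists z\,\psi[g:=G(z,\cdot)]$. Its nontrivial direction again uses only choice over $\omega$, picking a counterexample $g$ for each $z$ and assembling them into a single $G$.
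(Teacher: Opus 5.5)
The paper does not actually prove this Fact. It quotes it as a corollary of Kleene's normal form theorem, and it passes from the predicate $P$ to the index $z_0$ with the words ``and hence''.

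Your proposal is a correct, self-contained proof. It is essentially the classical quantifier-contraction argument behind Kleene's theorem:
\begin{enumerate}
\item turn $\forall y$ into $\forall g$ evaluated at $0$;
\item move universal function quantifiers leftward across existential number quantifiers via $\exists z\,\forall g\,\psi\iff\forall G\,\exists z\,\psi[g:=G(z,\cdot)]$;
\item contract like quantifiers by pairing;
\item take $z_0$ to be the unbounded search for a witness $m$.
\end{enumerate}

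Compared with the bare citation, your route makes explicit the two features the appendix actually relies on. First, $P$ is computable relative to the oracle, because every substitution is an oracle query at a computably determined argument. Second, a convergent computation $\varphi^x_{z_0}(n)$ depends only on a finite initial segment of $x$. This second point is what justifies the appendix's definition of $\varphi^s_{z_0}(n)\downarrow$ for finite strings $s$.

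The bookkeeping you flag as the main obstacle is cleanest as an induction on the length of the prefix, from the inside out. Suppose the tail is already equivalent to $\forall G\,\exists m\,P$. A new leading $\forall y$ is absorbed into $G$ by pairing. A new leading $\exists z$ is moved past $\forall G$ by the function-valued equivalence and then merged with $\exists m$.

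One small correction concerns choice. In your number-valued trick, taking the least counterexample needs no choice. The nontrivial direction of the function-valued equivalence is different: it picks a counterexample $g_z\in\omega^\omega$ for each $z$. That is countable choice for sets of reals, not ``only choice over $\omega$'' in the sense of your first trick. This is harmless in ZFC, the paper's implicit framework, so it does not affect the correctness of your argument.
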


\begin{proposition5.1}[\protect{\cite[Exercise I.32]{Montalban_2026}}]The set
\[\mathrm{NIS}=\{e\in\omega \mid \neg\exists x \in\omega^\omega\forall n \in \omega (\varphi_e(x_{n+1})=x_n)\}\]
is $\Pi^1_1$-complete.
\end{proposition5.1}

\begin{proof}
Given $s = (s_0, \dots, s_{m-1}) \in \omega^{<\omega}$, we denote by $\langle s_0,\dots,x_{m-1}\rangle$ the natural number corresponding to this string via an effective bijection, and the concatenation, indicated as before using $s^\frown t$, is used in both contexts. $\mathrm{NIS}$ is $\Pi^1_1$ by definition.

Let $S\subseteq\omega$ be some $\Pi_1^1$ set, and $z_0$ be an index for it (given by the previous fact). We now show a many-one-reduction from $S$ to $\mathrm{NIS}$. 

When $s=\langle s_0\dots,s_t\rangle\in\omega$, 
we write $\varphi_{z_0}^s(n)\downarrow$ iff $\varphi^x_{z_0}(n)[t]\downarrow$ 
for some/any $x\in\omega^\omega$ such that $\forall i < t (x_i=s_i)$. Do notice that checking whether $\varphi_{z_0}^s(n)\downarrow$ is computable. Now we define $f:\omega\times\omega \to \omega$ as
\[
    f(n, \langle s_0,\dots,s_{m-1}\rangle)=\begin{cases}
\langle s_0,\dots,s_{m-2}\rangle&\text{if}\quad n>0\:\land\:\neg\varphi_{z_0}^{\langle s_0,\dots,s_{m-1}\rangle}(n)\downarrow\\
\uparrow &\text{otherwise}
\end{cases}
\]
As $f$ is computable, using the S-m-n theorem, we can find a computable map $g: \omega \to \omega$ such that $f(n,\cdot)=\varphi_{g(n)}$. We show that $g(n)\in\mathrm{NIS}$ iff $n\in S$, so $S\le_m \mathrm{NIS}$.

Suppose first that $g(n)\not\in \mathrm{NIS}$. So there exists some sequence $q \in \omega^\omega$ such that $\forall m \in \omega (\varphi_{g(n)}(q_{m+1})=f(n,q_{m+1})=q_m)$. 
In particular, by the definition of $f$, it must be the case that $\neg\varphi_{z_0}^{q_m}(n)$ for every $m\in \omega$. 
Moreover, it is clear that $q_{m+1}=q_m^\frown q'_{m}$ for some $q'_{m}\in\omega$. 
As any element of $q \in \baire$ codes a finite string, consider the element of the Baire space which extends all of them: $\tilde{q} \coloneq \bigcup_{n \in \omega} q_n$. 
Clearly, it cannot be the case that $\varphi^{\tilde{q}}_{z_0}(n)\downarrow$, for otherwise it would be that $\varphi^{\tilde{q}}_{z_0}(n)[t]\downarrow$ for some $t>0$ and hence $\varphi^{q_t}_{z_0}(n)\downarrow$, which is a contradiction. So $n\not\in S$.

Conversely, assume $n\not\in S$, and pick $x \in \omega^\omega$ so that $\varphi^x_{z_0}(n)\uparrow$. For $m\in\omega$, let $q_m=\langle x_0,\dots,x_{m-1}\rangle$ (and $q_0=\langle\rangle$). It is immediate that $\forall m \in \omega(f(n,q_{m+1})=q_m)$: indeed, since $q_{m+1}=q_m^\frown x_m$, it suffices to check that $\neg\varphi_{z_0}^{q_{m+1}}(n)\downarrow$. But if it were not so, then in particular $\varphi^x_{z_0}(n)[m]\downarrow$, which is a contradiction.
\end{proof}

\printbibliography

@article{mtgTuringComp,
  author = {Churchill, Alex and Biderman, Stella and Herrick, Austin},
  title = {Magic: The Gathering Is Turing Complete},
  booktitle = {10th International Conference on Fun with Algorithms (FUN 2021)},
  pages = {9:1--9:19},
  year = {2020},
  volume = {157},
  publisher = {Schloss Dagstuhl -- Leibniz-Zentrum f{\"u}r Informatik},
  doi = {10.4230/LIPIcs.FUN.2021.9}
}

@article{Ward2009MonteCS,
  title={Monte Carlo search applied to card selection in Magic: The Gathering},
  author={Peter I. Cowling and Colin D. Ward},
  journal={2009 IEEE Symposium on Computational Intelligence and Games},
  year={2009},
  pages={9-16},
  doi={10.1109/CIG.2009.5286501}
}

@article{Ward2012MonteCS,
  author={Cowling, Peter I. and Ward, Colin D. and Powley, Edward J.},
  journal={IEEE Transactions on Computational Intelligence and AI in Games}, 
  title={Ensemble Determinization in Monte Carlo Tree Search for the Imperfect Information Card Game Magic: The Gathering}, 
  year={2012},
  volume={4},
  number={4},
  pages={241-257},
  doi={10.1109/TCIAIG.2012.2204883}
}

@misc{HardArithmetic,
      title={Magic: the Gathering is as Hard as Arithmetic}, 
      author={Stella Biderman},
      year={2020},
      archivePrefix={arXiv},
      doi={10.48550/arXiv.2003.05119}
}

@book{kechris,
  title={Classical Descriptive Set Theory},
  author={Kechris, A.},
  doi={10.1007/978-1-4612-4190-4},
  series={Graduate Texts in Mathematics},
  year={1995},
  publisher={Springer New York}
}

@book{Montalban_2026, 
  place={Cambridge}, 
  series={Perspectives in Logic}, 
  title={Computable Structure Theory: Beyond the Arithmetic}, 
  publisher={Cambridge University Press}, 
  author={Montalbán, Antonio}, 
  year={2026}, 
  collection={Perspectives in Logic},
  doi={1017/9781108780568}
}

@misc{rulebook,
    title={Yu‑Gi‑Oh! TCG Rulebook}, 
    year={2021},
    author={Konami},
    url={https://www.yugioh-card.com/eu/play/tcg-rulebook/}
}

@article{Kleene1956,
	author = {S. C. Kleene},
	doi = {10.2307/2268429},
	journal = {Journal of Symbolic Logic},
	number = {4},
	pages = {411--412},
	publisher = {Association for Symbolic Logic},
	title = {Hierarchies of Number-Theoretic Predicates},
	volume = {21},
	year = {1956}
}
\end{document}